\listfiles
\documentclass[11pt]{amsart}

\usepackage{amsmath}
\usepackage{amssymb,mathtools,amsthm,latexsym}

\setlength{\textwidth}{\paperwidth}
\addtolength{\textwidth}{-2in}
\calclayout

\title{Pointwise inequalities for Sobolev functions on generalized cuspidal domains}
\author{Zheng Zhu}

\address{Zheng Zhu\\
Department of Mathematics and Statistics\\
University of Jy\"askyl\"a, P.O. Box 35 (MaD),
FI-40014, Jyv\"askyl\"a, Finland}
\email{\tt zheng.z.zhu@jyu.fi}

\usepackage{stackrel}

\usepackage{color}

\numberwithin{equation}{section}
\setcounter{secnumdepth}{2}
\setcounter{tocdepth}{1}
\long\def\colred#1\endred{{\color{red}#1}}
\long\def\colgreen#1\endgreen{{\color{green}#1}}
\long\def\colmagenta#1\endmagenta{{\color{magenta}#1}}
\long\def\colblue#1\endblue{{\color{blue}#1}}
\long\def\colyellow#1\endyellow{{\color{yellow}#1}}

\usepackage{tikz}
\usetikzlibrary{matrix,arrows}

\theoremstyle{plain}
\newtheorem{theorem}[equation]{Theorem}

\newtheorem{lemma}[equation]{Lemma}
\newtheorem{corollary}[equation]{Corollary}

\theoremstyle{remark}

\theoremstyle{definition}

\newtheorem{defn}[equation]{Definition}

\newtheorem*{question*}{Question}



\usepackage{graphicx}

\subjclass[2010]{46E35, 30L99}

\thanks{The author has been supported by the Academy of Finland via Centre of Excellence in Analysis and Dynamics Research (Project \#323960). He would like to thank Prof P. Koskela and Prof. S. Eriksson-Bique for some useful discussion.}

\newcounter{prob}
\setcounter{prob}{1}

\def\rr{{\mathbb R}}
\def\rn{{{\rr}^n}}

\def\fz{\infty}

\def\boz{{\Omega}}

\def\bint{{\ifinner\rlap{\bf\kern.25em--}
\int\else\rlap{\bf\kern.45em--}\int\fi}\ignorespaces}

\def\bbint{{\ifinner\rlap{\bf\kern.25em--}
\hspace{0.078cm}\int\else\rlap{\bf\kern.45em--}\int\fi}\ignorespaces}

\def\lp{{L^{p}(\boz)}}

\def\r{\right}
\def\lf{\left}

\def\setcolon{;}

\def\XXint#1#2#3{{\setbox0=\hbox{$#1{#2#3}{\int}$ }
\vcenter{\hbox{$#2#3$ }}\kern-.58\wd0}}

\newcommand{\co}{\mskip0.5mu\colon\thinspace}   

%
%
%
%
\def\vint_#1{\mathchoice%
        {\mathop{\kern 0.2em\vrule width 0.6em height 0.69678ex depth -0.58065ex
                \kern -0.8em \intop}\nolimits_{\kern -0.4em#1}}%
        {\mathop{\kern 0.1em\vrule width 0.5em height 0.69678ex depth -0.60387ex
                \kern -0.6em \intop}\nolimits_{#1}}%
        {\mathop{\kern 0.1em\vrule width 0.5em height 0.69678ex
            depth -0.60387ex
                \kern -0.6em \intop}\nolimits_{#1}}%
        {\mathop{\kern 0.1em\vrule width 0.5em height 0.69678ex depth -0.60387ex
                \kern -0.6em \intop}\nolimits_{#1}}}
\def\vintslides_#1{\mathchoice%
        {\mathop{\kern 0.1em\vrule width 0.5em height 0.697ex depth -0.581ex
                \kern -0.6em \intop}\nolimits_{\kern -0.4em#1}}%
        {\mathop{\kern 0.1em\vrule width 0.3em height 0.697ex depth -0.604ex
                \kern -0.4em \intop}\nolimits_{#1}}%
        {\mathop{\kern 0.1em\vrule width 0.3em height 0.697ex depth -0.604ex
                \kern -0.4em \intop}\nolimits_{#1}}%
        {\mathop{\kern 0.1em\vrule width 0.3em height 0.697ex depth -0.604ex
                \kern -0.4em \intop}\nolimits_{#1}}}

\begin{document}

\maketitle
\begin{center}
 \textit{In memory of Prof. Jan Mal\'y (Honza)}
\end{center}

\begin{abstract}
Let $\boz\subset\rr^{n-1}$ be a bounded star-shaped domain and $\boz_\psi$ be an outward cuspidal domain with base domain $\boz$. Then, we prove for $1<p\leq\fz$, that $W^{1, p}(\boz_\psi)=M^{1, p}(\boz_\psi)$ if and only if $W^{1, p}(\boz)=M^{1, p}(\boz)$\\
\end{abstract}

\section{Introduction}
A Sobolev function $u$ on $\rn$ satisfies the pointwise inequality 
\[|u(z_1)-u(z_2)|\leq |z_1-z_2|(CM(|\nabla u|)(z_1)+CM(|\nabla u|)(z_2))\]
at every Lebesgue points of $u$, where $M(|\nabla u|)$ is the Hardy-Littlewood maximal function of $|\nabla u|$, see \cite{AandF, BandH, Pitor, Lewis}. Motived by this fact,  Haj\l{}asz defined the so-called Haj\l{}asz-Sobolev space $M^{1, p}(U)$ which consists of all $u\in L^p(U)$ with a nonnegative $g\in L^p(U)$ such that for every $z_1, z_2\in U\setminus E$ with $|E|=0$, we have
\[|u(z_1)-u(z_2)|\leq|z_1-z_2|(g(z_1)+g(z_2)),\]
where $U\subset\rn$ is a domain. For all domains $U$ and any $p\in[1, \fz]$, one always has $M^{1, p}(U)\subset W^{1, p}(U)$. Furthermore, when $p=1$, the inclusion is strict, see \cite{Pitor, ks}. Since $M^{1, p}(U)=W^{1, p}(U)$ implies that every $u\in W^{1, p}(U)$ supports global Poincar\'e inequality on a bounded domain $U$, see \cite{Pitor}, it is a natural question to ask
\begin{center}
\textit{For which domains $U\subset\rn$ do we have $M^{1,p}(U)=W^{1,p}(U)$?}
\end{center}

In this note, we concentrate on a class of generalized outward cuspidal domains. Let $\boz\subset\rr^{n-1}$ be a bounded star-shaped domain with star-center $x_o\in\boz$. Let $\psi\colon (0,1]\to (0,\fz)$ be a left continuous and non-decreasing function. We consider outward cuspidal domains of the form
\begin{eqnarray}\label{cusp}
\boz_\psi&:=&\lf\{(t, x)\in (0, 1)\times\rr^{n-1}\setcolon\frac{x-x_o}{\psi(t)}+x_o\in\boz \r\}\\
&\cup&\lf\{(t, x)\in[1,2)\times\rr^{n-1}\setcolon \frac{x-x_o}{\psi(1)}+x_o\in\boz\r\},\nonumber
\end{eqnarray}
\begin{figure}[h!tbp]
\centering
\includegraphics[width=0.5\textwidth]
{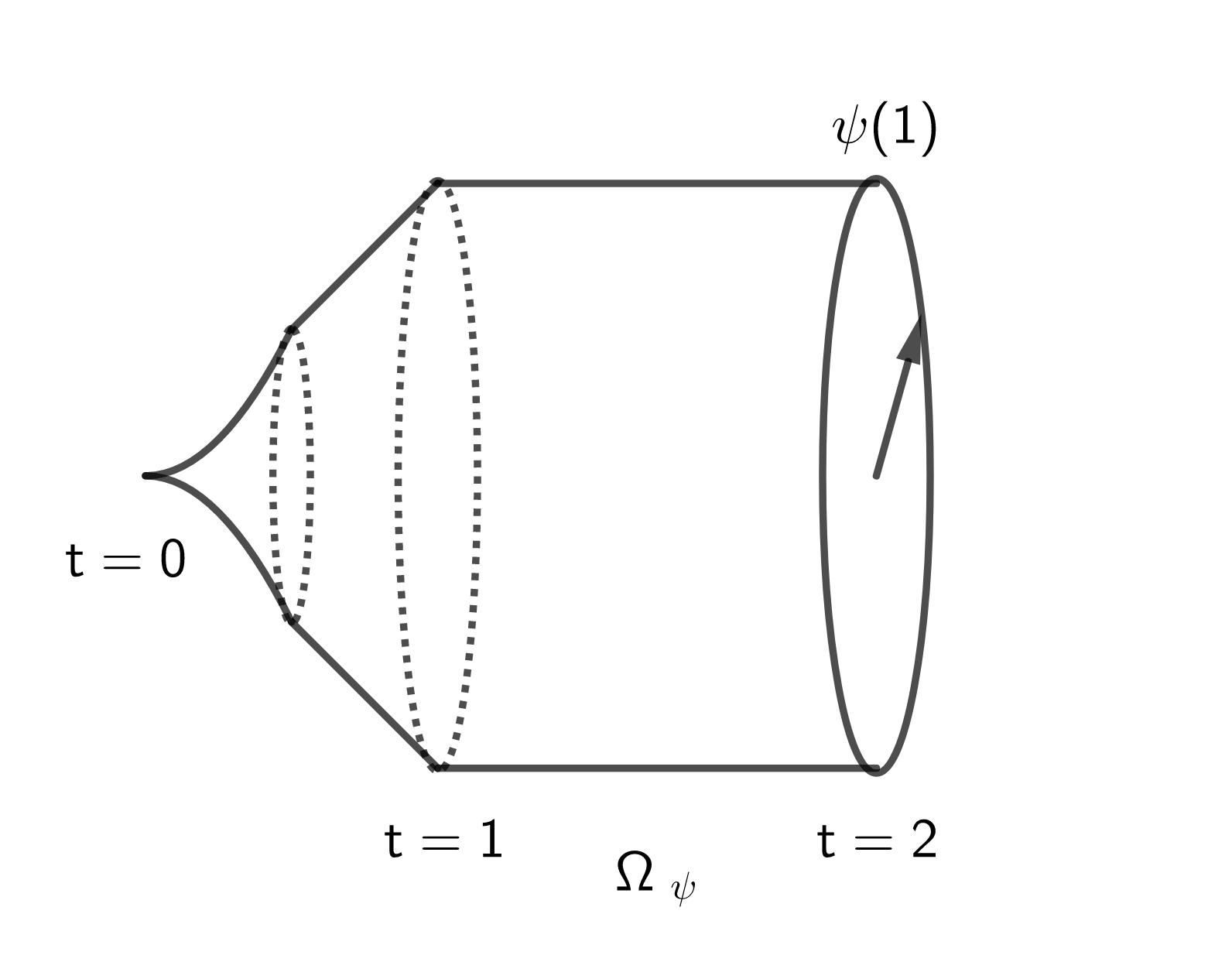}
\label{fig:cusps}
\end{figure}
We require $\psi$ is left-continuous is just to get that $\boz_\psi$ is open. The $(n-1)$-dimensional bounded star-shaped domain $\boz$ is called the base domain of the outward cuspidal domain $\boz_\psi$. The star-shapeness is necessary to guarantee that $\boz_\psi$ is always a domain for every left continuous and increasing function $\psi$. One can easily see that $\boz_\psi$ is also star-shaped. Since we only concentrate on the cuspidal singularity on the tip, the seemingly strange annexes are included for some technical convenience in the argument. 
 
In next theorem, we will show that the classical Sobolev space coincides with the Haj\l{}as-Sobolev space on an outward cuspidal domain $\boz_\psi$ if and only if these two function spaces coincide on the base domain $\boz$. This theorem implies that even on a very irregular domain $U\subset\rn$, we can have $M^{1, p}(U)=W^{1, p}(U)$, for example see Corollary \ref{cor2} below. Hence, it is difficult may even impossible to give a characterization to those domains where the classical Sobolev space and Haj\l{}asz-Sobolev spaces coincide.
\begin{theorem}\label{thm:main}
Let $\boz\subset\rr^{n-1}$ be a bounded star-shaped domain and $\psi:(0, 1]\to(0, \fz)$ be a left continuous and increasing function. Define the corresponding cuspidal domain $\boz_\psi$ as in \eqref{cusp}. Then $W^{1,p}(\boz_\psi)=M^{1,p}(\boz_\psi)$ if and only if $W^{1, p}(\boz)=M^{1, p}(\boz)$ for $1<p\leq\fz$.
\end{theorem}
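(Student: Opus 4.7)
The plan is to exploit the star-shape structure of $\boz_\psi$ throughout. Since $M^{1,p}(U)\subset W^{1,p}(U)$ holds on every domain, each implication reduces to the reverse inclusion. The geometric bedrock is that $z_\ast:=(3/2,x_o)$ lies in the cylindrical annex $A:=[1,2)\times(\psi(1)(\boz-x_o)+x_o)$ and is a star-center for $\boz_\psi$ (since $\psi$ is non-decreasing), while the affine map $\Phi\co y\mapsto(y-x_o)/\psi(1)+x_o$ bilipschitzly identifies the cross-section of $A$ with $\boz$.

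\emph{Necessity} ($W^{1,p}(\boz_\psi)=M^{1,p}(\boz_\psi)\Rightarrow W^{1,p}(\boz)=M^{1,p}(\boz)$). Given $u\in W^{1,p}(\boz)$, I would pick $\eta\in C_c^\infty((1,2))$ with $\eta\equiv 1$ on $[5/4,7/4]$ and set $\tilde u(t,y):=\eta(t)\,u(\Phi(y))$ on $A$, extended by zero to the rest of $\boz_\psi$. A direct calculation yields $\tilde u\in W^{1,p}(\boz_\psi)$, so by hypothesis $\tilde u$ has a Haj\l asz gradient $G\in L^p(\boz_\psi)$. Since $p>1$, Fubini produces a slice $t_0\in[5/4,7/4]$ for which $G(t_0,\cdot)\in L^p$ of the cross-section is still a Haj\l asz gradient of $\tilde u(t_0,\cdot)=u\circ\Phi$; pulling back by $\Phi^{-1}$ then gives a Haj\l asz gradient for $u$ on $\boz$.

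\emph{Sufficiency} (converse). For $u\in W^{1,p}(\boz_\psi)$ I would build the Haj\l asz gradient $g$ in two stages. First (\emph{slice part}), Fubini together with the base-domain hypothesis gives, for a.e.\ $t\in(0,2)$, a Haj\l asz gradient $g_t$ for the slice function $u(t,\cdot)$, whose $L^p$-norm (after the natural $\psi(t)$-rescaling that identifies the slice with $\boz$) is controlled by $\|\nabla_x u(t,\cdot)\|_{L^p}$; assembling yields $g_1\in L^p(\boz_\psi)$ controlling pairs on a common horizontal slice. Second (\emph{chain part}), for $z_1,z_2$ on different slices I would split the problem by whether $|z_1-z_2|$ is smaller or larger than a fixed threshold $\epsilon$. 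For large separation the bound $|u(z_1)-u(z_2)|\le|u(z_1)|+|u(z_2)|$ is absorbed by including an $L^p$-correction $g_0$ of the form $|u|/\epsilon$. For small separation, a standard chain-of-balls / Poincar\'e-telescoping argument along a path from $z_1$ to $z_2$ through $\boz_\psi$ (using $z_\ast$ as an intermediate point only when the direct path is obstructed) produces the bound via the Hardy-Littlewood maximal function $M(|\nabla u|\chi_{\boz_\psi})$. Combining,
\[ g:=C\bigl(g_1+M(|\nabla u|\chi_{\boz_\psi})+g_0\bigr) \]
is in $L^p(\boz_\psi)$ since $p>1$ makes the maximal operator $L^p$-bounded, and it gives the required Haj\l asz inequality.

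\emph{Main obstacle.} The delicate step is executing the chain-of-balls construction uniformly down to the cuspidal tip $t=0$. For points close to the tip, every admissible chain must thread through horizontal slices whose diameters vary like $\psi(t)$, and the balls used in the telescoping must lie in $\boz_\psi$ with controlled overlap so that the resulting sum of averages of $|\nabla u|$ can be dominated by $M(|\nabla u|\chi_{\boz_\psi})$. The star-shape hypothesis combined with the monotonicity of $\psi$ supplies a uniform cone of radius $\gs\psi(t)$ around each star-shape segment at height $t$, which allows choosing balls of radius comparable to $\psi(t)$ at each height; however, tracking that this cone estimate really gives an $L^p$-bound independent of the shape of $\psi$ is the main technical hurdle, and is precisely where the hypotheses $p>1$ and star-shapeness of $\boz$ are used. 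The case $p=\infty$ is treated by the same scheme, with $M$ replaced by its $L^\infty$-stable analogue (a supremum over balls of radii up to $\diam(\boz_\psi)$).
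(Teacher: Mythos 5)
Your necessity argument and the ``slice part'' of your sufficiency argument track the paper closely: the paper also extends $u$ trivially in the $t$-direction over the annex $[1,2)\times\psi(1)\boz$, restricts a Haj\l asz gradient of the extension to a well-chosen slice, and, in the other direction, assembles slice-wise Haj\l asz gradients $g_t$ using the scale-invariance of the constant (Lemma \ref{le:uniform}) together with a bounded-overlap covering of $(0,2]$. The genuine gap is in your ``chain part''. The pointwise inequality
\[
|u(z_1)-u(z_2)|\leq C|z_1-z_2|\bigl(M(|\nabla u|\chi_{\boz_\psi})(z_1)+M(|\nabla u|\chi_{\boz_\psi})(z_2)\bigr)
\]
with the ordinary Hardy--Littlewood maximal function is false on sharp outward cusps, so no chain-of-balls construction can produce it. Take $\psi(t)=t^s$ with $s>1$, let $f$ be smooth with $f'\geq 0$ supported in $[\tau,2\tau]$ and $\int f'=1$, and set $u(t,x)=f(t)$, $z_1=(\tau/2,x_o)$, $z_2=(4\tau,x_o)$. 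Every ball centered at $z_1$ or $z_2$ that meets the support of $\nabla u$ has radius $r\gtrsim\tau$, while $\int_{B\cap\boz_\psi}|\nabla u|\leq|\boz|\,\psi(2\tau)^{n-1}$; hence both maximal functions are $\lesssim\psi(2\tau)^{n-1}\tau^{-n}$ and the right-hand side above is $\lesssim\psi(2\tau)^{n-1}\tau^{1-n}\to0$, whereas $|u(z_1)-u(z_2)|=1$. (Your $g_1$ vanishes here since $u$ is constant on slices, and $|z_1-z_2|\approx\tau$ is below any fixed threshold $\epsilon$, so $g_0=|u|/\epsilon$ does not help either.) This is exactly the obstacle you flag: a ball of radius $\psi(t)$ sitting at distance $\approx t-t_1\gg\psi(t)$ from $z_1$ has average of $|\nabla u|$ as large as $(t/\psi(t))^{n}$ times $M(|\nabla u|\chi_{\boz_\psi})(z_1)$, and no choice of cones of aperture $\gtrsim\psi(t)$ repairs this; it is also why Romanov's original result carried the restriction $p>\frac{1+(n-1)s}{n}$.

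The paper circumvents this by replacing the $n$-dimensional maximal function with the one-dimensional directional maximal function $M^\tau$ of \eqref{t-max fun}, which averages only over the fiber $S_x$ in the $t$-direction and therefore does not see the collapsing cross-sectional measure (in the example above $M^\tau[|\nabla u|](z_1)\approx\tau^{-1}$, which is exactly what is needed). The key pointwise estimate for points on different slices is Lemma \ref{l:est1} (imported from \cite{IMRN}), which bounds $|u(z_1)-u(z_2)|$ by $M^\tau[|\nabla u|]+M^\tau[g_u]$ with $g_u$ the assembled slice gradient, and the $L^p$-boundedness of $M^\tau$ (Lemma \ref{lem:M}) then closes the argument. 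For $p=\fz$ the paper does not run a maximal-function scheme at all but invokes the characterization of $W^{1,\fz}=M^{1,\fz}$ by quasiconvexity from \cite{hkt}, noting that $\boz_\psi$ is quasiconvex if and only if $\boz$ is. As written, your sufficiency direction does not go through without substituting $M^\tau$ (or an equivalent anisotropic device) for $M$.
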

By \cite{Pitor}$, M^{1, p}(U)$ can always be embedded into $W^{1, p}(U)$, for arbitrary $1\leq p\leq\fz$ and arbitrary domain $U\subset\rn$. Hence, if one can show they coincide as sets, the classical $Open\ Mapping\ Theorem$ will imply that the corresponding norms of a fix element are comparable up to a uniform constant. In \cite{Romanov}, Romanov showed $W^{1, p}(\boz_\psi)=M^{1, p}(\boz_\psi)$ if $\boz\subset\rr^{n-1}$ is a unit ball, $\psi(t)=t^s$ with $s>1$ and $p>\frac{1+(n-1)s}{n}$. The main result in \cite{IMRN} told us that the above restriction on $p$ is superfluous. To be more precise, the authors showed if $\boz\subset\rr^{n-1}$ is an unit ball, $M^{1, p}(\boz_\psi)=W^{1, p}(\boz_\psi)$ for arbitrary $1<p\leq\fz$ and arbitrary left continuous and increasing function $\psi$. Since unit ball is a bounded star-shaped domain for $W^{1, p}(B^{n-1}(0, 1))=M^{1, p}(B^{n-1}(0, 1))$ for every $1<p\leq\fz$, the result in \cite{IMRN} is a special case of Theorem \ref{thm:main} here.

A domain $U\subset\rn$ is called a $W^{1, p}$-extension domain for $1\leq p\leq\fz$, if for every $u\in W^{1, p}(U)$, there exists an extension function $E(u)\in W^{1, p}(\rn)$ with $E(u)\big|_\boz\equiv u$ and 
\[\|E(u)\|_{W^{1, p}(\rn)}\leq C\|u\|_{W^{1, p}(U)}\]
for a positive constant $C$ independent of $u$. By the classical result due to Haj\l{}asz \cite{Pitor}, if $U\subset\rn$ is a $W^{1, p}$-extension domain for $1<p\leq\fz$, then $W^{1,p}(U)=M^{1, p}(U)$. Hence, we have the following corollary to Theorem \ref{thm:main}.
\begin{corollary}\label{cor}
Let $\boz\subset\rr^{n-1}$ be a bounded star-shaped $W^{1, p}$-extension domain for $1<p\leq\fz$ and $\psi:(0, 1]\to(0, \fz)$ be a left continuous and increasing function. Then we have $W^{1, p}(\boz_\psi)=M^{1, p}(\boz_\psi)$, where $\boz_\psi$ is the corresponding outward cuspidal domain defined in (\ref{cusp}).
\end{corollary}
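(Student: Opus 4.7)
The plan is to deduce Corollary \ref{cor} by chaining two ingredients already on the table: the Haj\l{}asz identification of $W^{1,p}$ and $M^{1,p}$ on extension domains, recalled in the paragraph preceding the statement, and Theorem \ref{thm:main} itself, which is the main result of the paper. There is nothing new to prove; the corollary is a direct readout.

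First I would apply Haj\l{}asz's theorem to the base domain. By hypothesis $\boz\subset\rr^{n-1}$ is a bounded star-shaped $W^{1,p}$-extension domain for the given exponent $p\in(1,\fz]$, so the quoted result immediately gives $W^{1,p}(\boz)=M^{1,p}(\boz)$ as sets. Second, I would feed this into Theorem \ref{thm:main}. The remaining hypotheses required by that theorem, namely boundedness and star-shapedness of $\boz$ and left-continuity plus monotonicity of $\psi\colon(0,1]\to(0,\fz)$, are exactly what is assumed in the corollary, and its conclusion is the biconditional $W^{1,p}(\boz)=M^{1,p}(\boz)\Longleftrightarrow W^{1,p}(\boz_\psi)=M^{1,p}(\boz_\psi)$. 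Combining the two steps yields $W^{1,p}(\boz_\psi)=M^{1,p}(\boz_\psi)$, which is the desired conclusion.

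There is no genuine obstacle in this deduction; all of the analytic content has been absorbed into Theorem \ref{thm:main}, and Corollary \ref{cor} is essentially a repackaging. The only minor bookkeeping worth noting is that the exponent ranges in the two ingredients match exactly ($1<p\leq\fz$ in both Haj\l{}asz's theorem and Theorem \ref{thm:main}), so no borderline case of $p$ needs separate treatment, and the norm comparability implicit in the equality of function spaces follows from the Open Mapping Theorem as recorded in the paragraph immediately after the statement of Theorem \ref{thm:main}.
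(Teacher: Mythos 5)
Your proposal is correct and matches the paper's own (implicit) argument exactly: the paper derives Corollary \ref{cor} by invoking Haj\l{}asz's result that a $W^{1,p}$-extension domain satisfies $W^{1,p}(\boz)=M^{1,p}(\boz)$ for $1<p\leq\fz$, and then applying the relevant implication of Theorem \ref{thm:main}. Nothing further is needed.
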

Our theorem enables a large class of new examples. In particular, the following corollary shows how the cuspidal construction can be iterated to give examples which have different cuspidal sigularities in every coordinate directions. We thank Sylvester Eriksson-Bique for pointing out this application.
\begin{corollary}\label{cor2}
Let $I\subset\rr$ be an interval which contains $0$ and $\{\psi_i:(0,1]\to(0, \fz)\}_{i=1}^n$ be a class of left continuous and increasing function. Set $\boz_1:=I$ and define a sequence of outward cuspidal domains $\{\boz_i\subset\rr^i\}_{i=2}^n$ inductively by setting
\begin{eqnarray}
\boz_i&:=&\lf\{(t, x)\in(0, 1)\times\rr^{i-1}; \frac{x-x_o^{i-1}}{\psi_i(t)}+x_o^{i-1}\in\boz_{i-1}\r\}\nonumber\\
        &\cup&\lf\{(t, x)\in[1, 2)\times\rr^{i-1}; \frac{x-x_o^{i-1}}{\psi_i(1)}+x_o^{i-1}\in\boz_{i-1}\r\},\nonumber
\end{eqnarray}
where $x_o^{i-1}\subset \boz_{i-1}$ is a star center. Then $W^{1, p}(\boz_n)=M^{1, p}(\boz_n)$ for every $1<p\leq\fz$.
\end{corollary}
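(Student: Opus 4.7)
The plan is to induct on $i$, invoking Theorem~\ref{thm:main} at each step. The combined inductive statement is that $\boz_i$ is a bounded star-shaped domain in $\rr^i$ and that $W^{1,p}(\boz_i) = M^{1,p}(\boz_i)$ for every $1 < p \leq \fz$.

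For the base case $i = 1$, $\boz_1 = I$ is (implicitly) a bounded interval containing $0$; in particular it is convex, hence star-shaped with $0$ as a star center, and Lipschitz, so a $W^{1,p}$-extension domain for every $1 \leq p \leq \fz$. By the classical theorem of Haj\l{}asz recalled just before Corollary~\ref{cor}, this gives $W^{1,p}(\boz_1) = M^{1,p}(\boz_1)$ for all $1 < p \leq \fz$.

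For the inductive step, assume $\boz_{i-1} \subset \rr^{i-1}$ is bounded and star-shaped with star center $x_o^{i-1}$, and that $W^{1,p}(\boz_{i-1}) = M^{1,p}(\boz_{i-1})$. By construction $\boz_i$ is precisely an outward cuspidal domain of the form \eqref{cusp} with base domain $\boz_{i-1}$ and profile function $\psi_i$. Boundedness of $\boz_i$ follows from boundedness of $\boz_{i-1}$ together with $\psi_i(t) \leq \psi_i(1)$ for $t \in (0,1]$; the fact that $\boz_i$ is again star-shaped is the elementary observation stated in the paragraph following \eqref{cusp}. Hence Theorem~\ref{thm:main} applies with $\boz = \boz_{i-1}$ and $\psi = \psi_i$, and the inductive hypothesis $W^{1,p}(\boz_{i-1}) = M^{1,p}(\boz_{i-1})$ immediately delivers $W^{1,p}(\boz_i) = M^{1,p}(\boz_i)$. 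Iterating from $i=1$ up to $i=n$ closes the induction.

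The corollary is really a formal consequence of Theorem~\ref{thm:main}: once one checks that boundedness, star-shapedness, and the Haj\l{}asz--Sobolev identity all propagate through the iterated cuspidal construction, the argument reduces to $n-1$ applications of the main theorem. The main point (and the one possible subtlety) is to note that the star-shapedness observation preceding Theorem~\ref{thm:main} allows the new outward cuspidal domain to itself serve as a base domain at the next stage; beyond this, no new analysis is required, and the content of Corollary~\ref{cor2} is the flexibility of Theorem~\ref{thm:main} in permitting the base domain to be itself a highly irregular cuspidal domain rather than, say, a ball.
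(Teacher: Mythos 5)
Your proposal is correct and follows essentially the same route as the paper: the paper's own proof is a one-line induction observing that each $\boz_i$ is a bounded star-shaped domain with $W^{1,p}(\boz_i)=M^{1,p}(\boz_i)$, so that Theorem \ref{thm:main} applies at every stage. You merely spell out the details (the base case via the extension-domain criterion and the propagation of boundedness and star-shapedness) that the paper leaves implicit.
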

\begin{proof}
By induction, for every $i\in\{1, 2, \cdots, n-1\}$, $\boz_i$ is a star-shaped domain with $W^{1, p}(\boz_i)=M^{1, p}(\boz_i)$. Hence, it is a direct corollary of Theorem \ref{thm:main}.
\end{proof}

\section{Definitions and Preliminaries}
In what follows, $U\subset\rn$ is always a domain and $\boz\subset\rn$ is always a bounded star-shaped domain. We denote $C^\fz(\overline{U})$ to be the restriction of $C^\fz(\rn)$ on $\overline U$ by setting 
\[C^\fz(\overline U):=\{u\big|_{\overline U}: u\in C^\fz(\rn)\}.\] 
For a measurable subset $E\subset\rn$, $\chi_E$ is the corresponding characteristic function and $|E|$ means the $n$-dimensional Hausdorff measure of $E$. Typically, $c$ or $C$ will be constants that depend on various parameters and may vary even on the same line of inequalities. The Euclidean distance between points $x, y$ in the Euclidean space $\rn$ is denoted by $|x-y|$. The open $m$-dimensional ball of radius $r$ centered at the point $x$ is denoted by $B^{m}(x, r)$. For two points $x, y\in\rn$, $[x, y]$ means the segment starting from $x$ to $y$.
\begin{defn}\label{de:star}
A domain $\boz\subset\rn$ is said to be star-shaped, if there exists a point $x_o\in\boz$ such that for every $x\in\boz$, the segment $[x, x_o]$ between $x$ and $x_o$ is contained in $\boz$. The point $x_o$ is called the star center of $\boz$.
\end{defn} 
For a star-shaped domain, the set of star centers may not be unique. For example, for a convex domain, every point inside the domain is a star center. From now on, whenever we mention a star-shaped domain $\boz$, it means we already fix a star center $x_o\in\boz$. Let $\boz\subset\rn$ be a bounded star-shaped domain. For every $0<\lambda<\fz$, we define 
$$\lambda\boz:=\lf\{x\in\rn: \frac{x-x_o}{\lambda}+x_o\in\boz\r\}.$$ 
We write 
\begin{equation}
\rn=\rr\times\rr^{n-1}:=\lf\{z:=(t, x)\in\rr\times\rr^{n-1}\r\}\, .\nonumber
\end{equation}
Let $\boz\subset\rr^{n-1}$ be a bounded star-shaped domain. We consider a left continuous and increasing function $\psi\colon (0,1]\to (0,\fz)$, extend the definition of $\psi$ to the interval $(0, 2)$ by setting 
\begin{equation}
\psi(t)=\psi(1), \ \ {\rm for\ every}\ \ t\in (1,2) \nonumber
\end{equation}
and write
\begin{equation}
\boz_\psi=\lf\{(t, x)\in (0, 2)\times\rr^{n-1}\setcolon \frac{x-x_o}{\psi(t)}+x_o\in\boz\r\}\, .\nonumber
\end{equation}

The space of locally integrable functions is denoted by $L^1_{\rm loc}(U)$. For every measurable set $Q\subset U$ with $0<|Q|<\infty$, and every non-negative measurable or integrable function $f$ on $Q$ we define the integral average of $f$ over $Q$ by 
\begin{equation}
\vint_{Q}f(w)\, dw:=\frac{1}{|Q|}\int_{Q}f(w)\, dw\, .\nonumber
\end{equation} 

Let us give the definitions of Sobolev space $W^{1,p}(U)$ and Haj\l asz-Sobolev space $M^{1,p}(U)$.
\begin{defn}\label{Sobolev}
We define the first order Sobolev space $W^{1,p}(U)$, $1\leq p\leq \fz$, as the set 
\begin{equation}
\left\{\, u\in L^p(U) \setcolon \nabla u\in L^p(U\setcolon\rn)\, \right\}\, .\nonumber
\end{equation}
Here $\nabla u=\left(\frac{\partial u}{\partial x_1}\, ,\, \dots \, ,\,\frac{\partial u}{\partial x_n}\right)$ is the weak (or distributional) gradient of a locally integrable function $u$. 
\end{defn}
We equip $W^{1,p}(U)$ with the norm:
\begin{equation}
\|u\|_{W^{1, p}(U)}=\|u\|_{L^p(U)}+\|\nabla u\|_{L^p(U)}\nonumber
\end{equation}
for $1\leq p\leq\fz$, where $\|\cdot\|_{L^p(U)}$ denotes the usual $L^p$-norm for $p \in [1,\infty]$. The following lemma from \cite[Page 13]{mazya} tells us that on a star-shaped domain, Sobolev functions can be approximated by global smooth functuons.
\begin{lemma}\label{le:approx}
Let $\boz\subset\rn$ be a star-shaped domain. Then $C^\fz(\overline\boz)\cap W^{1, p}(\boz)$ is dense in $W^{1, p}(\boz)$ for $1\leq p\leq\fz$.
\end{lemma}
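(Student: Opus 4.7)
The plan is to prove the lemma by the classical dilation-and-mollification technique, which will exploit star-shapedness to reduce to the situation where the Sobolev function is defined on an open set strictly containing $\overline{\boz}$ and can then be smoothed by convolution with a standard mollifier. After translation I may assume the star center is $x_o=0$.

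First, for a given $u\in W^{1,p}(\boz)$ and each $\mu\in(0,1)$, I would introduce the dilate $v_\mu(x):=u(\mu x)$. Because $\boz$ is star-shaped at $0$, the dilated set $(1/\mu)\boz:=\{x\in\rn:\mu x\in\boz\}$ contains $\boz$, and when $\boz$ is bounded (the setting used throughout the paper) its closure is compactly contained in $(1/\mu)\boz$; in the unbounded case one can exhaust $\boz$ by bounded star-shaped subdomains. A change of variables will show $v_\mu\in W^{1,p}((1/\mu)\boz)$ with $\nabla v_\mu(x)=\mu(\nabla u)(\mu x)$.

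Next, I would convolve $v_\mu$ with a standard mollifier $\varphi_\epsilon$ supported in $B(0,\epsilon)$. For $\epsilon$ small enough that $\overline{\boz}+B(0,\epsilon)\subset (1/\mu)\boz$, the convolution $v_\mu*\varphi_\epsilon$ is smooth on a neighborhood of $\overline{\boz}$, so it lies in $C^\fz(\overline{\boz})\cap W^{1,p}(\boz)$. Standard mollifier theory then yields $v_\mu*\varphi_\epsilon\to v_\mu$ in $W^{1,p}(\boz)$ as $\epsilon\to 0^+$ for $1\leq p<\fz$.

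The main convergence to establish is $v_\mu\to u$ in $W^{1,p}(\boz)$ as $\mu\to 1^-$, which will follow from $L^p$-continuity of translations together with the identity $\nabla v_\mu(x)=\mu(\nabla u)(\mu x)$ and dominated convergence. A diagonal argument combining the two limits will produce the desired approximating sequence in $C^\fz(\overline{\boz})\cap W^{1,p}(\boz)$. The step I expect to be the main obstacle is the case $p=\fz$, where translation is not continuous in $L^\fz$ and norm-density of smooth functions in $W^{1,\fz}$ fails in general; here the density should be interpreted in the weak-$*$ sense, with a uniform $W^{1,\fz}$ bound together with $L^q_{\rm loc}$ convergence for every $1\leq q<\fz$, which is the form actually used in the subsequent arguments.
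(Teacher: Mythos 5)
Your overall strategy---dilate toward the star center, then mollify---is exactly the classical argument behind this lemma; the paper itself gives no proof and simply cites Maz'ya (p.~13), whose proof is the same dilation-plus-mollification scheme. There is, however, a genuine gap at the step you treat as automatic: the claim that for a bounded star-shaped $\boz$ (star center $x_o=0$) and $\mu\in(0,1)$ the closure $\overline{\boz}$ is a compact subset of $(1/\mu)\boz$, equivalently that $\mu\overline{\boz}\subset\boz$. This does \emph{not} follow from Definition \ref{de:star}, which tests segments only from points of $\boz$, not of $\overline{\boz}$. The slit disk $\boz=B^2(0,1)\setminus\big([1/2,1)\times\{0\}\big)$ is bounded and star-shaped with respect to the origin in that sense, yet $\overline{\boz}=\overline{B^2(0,1)}$, so $\mu\overline{\boz}$ meets the slit for every $\mu>1/2$ and $\mu\overline{\boz}\not\subset\boz$; your convolution $v_\mu*\varphi_\epsilon$ is then not even defined on a neighborhood of $\overline{\boz}$. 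Worse, the lemma itself fails for this domain when $1\le p<2$: the angle function $\theta\in(0,2\pi)$ belongs to $W^{1,p}(\boz)$ but has one-sided traces $0$ and $2\pi$ on the slit, whereas every element of $C^\fz(\overline{\boz})$ (a restriction of a globally smooth function) has equal one-sided traces there, so no such sequence can converge to $\theta$ in $W^{1,1}$ of the two half-neighborhoods of the slit. The missing ingredient is a quantitative form of star-shapedness---e.g.\ star-shapedness with respect to a ball $B(x_o,r)$, or the requirement that $[x_o,x)\subset\boz$ for every $x\in\overline{\boz}$---under which $\mu\overline{\boz}\subset\boz$ does hold and your argument goes through verbatim. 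This is as much a defect of the lemma's statement (and of the cited source) as of your write-up, but since the containment is the crux of the entire proof, it must be justified or the hypothesis strengthened.

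The remaining steps of your sketch are sound for $1\le p<\fz$: the change of variables, the convergence $v_\mu\to u$ in $W^{1,p}$ via continuity of dilations in $L^p$, and the diagonal argument are all standard. Your caveat about $p=\fz$ is apt and should not be glossed over: norm-density fails already on a ball (one cannot approximate $|x|$ in $W^{1,\fz}$ by smooth functions), so for $p=\fz$ the lemma can only hold in a weak-$*$ or local sense; fortunately the paper never invokes it for $p=\fz$, since that case of the main theorem is handled separately through quasiconvexity.
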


For $u\in\lp$, we denote by $\mathcal{D}_p(u)$ the class of functions $0\leq g\in\lp$ for which there exists $E\subset U$ with $|E|=0$, so that 
\begin{equation}
|u(z_1)-u(z_2)|\leq |z_1-z_2|\left(g(z_1)+g(z_2)\right), \ \ {\rm for} \ \ z_1, z_2\in U\setminus E\, .\nonumber
\end{equation}
\begin{defn}\label{hajlasz}
We define the Haj\l asz-Sobolev space $M^{1,p}(U)$, $1\leq p\leq \fz$, as the set 
\begin{equation}
\left\{u\in L^p(U), \mathcal{D}_p(u)\neq\emptyset\right\}\, .\nonumber
\end{equation}
\end{defn}
We equip $M^{1,p}(U)$ with the norm:
\begin{equation}
\|u\|_{M^{1, p}(U)}=\|u\|_{L^p(U)}+\inf_{g\in\mathcal{D}_p(u)}\|g\|_{L^p(U)}\, .\nonumber
\end{equation}
for $1\leq p\leq\fz$. For $1<p\leq\fz$, we write $W^{1, p}(U)=_C M^{1, p}(U)$ if we have $W^{1, p}(U)=M^{1, p}(U)$ with 
\[\frac{1}{C}\|\nabla u\|_{L^p(U)}\leq\inf_{g\in\mathcal D_p(u)}\|g\|_{L^p(U)}\leq C\|\nabla u\|_{L^p(U)}\]
for a positive constant $C>1$ independent of $u\in W^{1, p}(U)$. The following lemma tells us that the equivalence of the Sobolev space and the Haj\l{}asz-Sobolev space on bounded star-shaped domain is invariant under linear stretching.
\begin{lemma}\label{le:uniform}
Let $\boz\subset\rn$ be a bounded star-shaped domain with $W^{1, p}(\boz)=_C M^{1, p}(\boz)$ for some $1<p\leq\fz$. Then for every $0<\lambda<\fz$, we have $W^{1, p}(\lambda\boz)=_{C} M^{1, p}(\lambda\boz)$ with a same constant $C$.
\end{lemma}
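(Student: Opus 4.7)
The plan is to reduce the claim on $\lambda\boz$ to the hypothesis on $\boz$ via the affine change of variables $T_\lambda\colon \lambda\boz\to\boz$, $T_\lambda(y)=(y-x_o)/\lambda+x_o$. Since $T_\lambda$ is a bijective similarity with ratio $1/\lambda$, it maps $\lambda\boz$ onto $\boz$ and induces a natural correspondence $u\leftrightarrow v$ between measurable functions on $\lambda\boz$ and on $\boz$ by $v(x):=u(T_\lambda^{-1}(x))=u(\lambda(x-x_o)+x_o)$. The proof then reduces to a bookkeeping check that the $L^p$-norms of the weak gradient and of the Hajłasz gradient both scale by the \emph{same} power of $\lambda$, so that the constant $C$ in the equivalence is preserved exactly.

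First I would verify that $u\in W^{1,p}(\lambda\boz)$ if and only if $v\in W^{1,p}(\boz)$, and that $\nabla v(x)=\lambda\,\nabla u(T_\lambda^{-1}(x))$. A straightforward change of variables then gives
\begin{equation}
\|\nabla u\|_{L^p(\lambda\boz)}=\lambda^{(n-p)/p}\|\nabla v\|_{L^p(\boz)},\nonumber
\end{equation}
with the usual interpretation $\lambda^{(n-p)/p}$ replaced by $\lambda^{-1}$ when $p=\infty$. Next, I would establish the analogous correspondence for Hajłasz gradients: if $g\in\mathcal{D}_p(v)$ on $\boz$, then $h(y):=\lambda^{-1}g(T_\lambda(y))$ belongs to $\mathcal{D}_p(u)$ on $\lambda\boz$, since for a.e.\ $y_1,y_2\in\lambda\boz$ with $x_i=T_\lambda(y_i)$,
\begin{equation}
|u(y_1)-u(y_2)|=|v(x_1)-v(x_2)|\leq|x_1-x_2|\bigl(g(x_1)+g(x_2)\bigr)=|y_1-y_2|\bigl(h(y_1)+h(y_2)\bigr).\nonumber
\end{equation}
The same relation run in reverse shows that $g\mapsto h$ is a bijection between $\mathcal{D}_p(v)$ and $\mathcal{D}_p(u)$, and the change of variables yields $\|h\|_{L^p(\lambda\boz)}=\lambda^{(n-p)/p}\|g\|_{L^p(\boz)}$, matching the scaling for $\|\nabla u\|_{L^p(\lambda\boz)}$ exactly.

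Combining these two identities with the hypothesis on $\boz$ gives
\begin{equation}
\inf_{h\in\mathcal{D}_p(u)}\|h\|_{L^p(\lambda\boz)}=\lambda^{(n-p)/p}\inf_{g\in\mathcal{D}_p(v)}\|g\|_{L^p(\boz)}\leq \lambda^{(n-p)/p}\,C\,\|\nabla v\|_{L^p(\boz)}=C\|\nabla u\|_{L^p(\lambda\boz)},\nonumber
\end{equation}
and similarly the lower bound. This yields $W^{1,p}(\lambda\boz)=_C M^{1,p}(\lambda\boz)$ with the same $C$. There is no substantial obstacle here; the only mild care needed is to track the $p=\infty$ case separately (where all $\lambda$-powers degenerate to $\lambda^{\pm1}$ but still cancel in the ratio) and to note that $\lambda\boz$ is itself star-shaped with star-center $x_o$, so the statement $W^{1,p}(\lambda\boz)=M^{1,p}(\lambda\boz)$ on the left-hand side is well-posed.
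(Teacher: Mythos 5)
Your proposal is correct and follows essentially the same route as the paper: the same affine change of variables, the same verification that $\|\nabla u\|_{L^p}$ and the Hajłasz gradient norm both scale by $\lambda^{(n-p)/p}$, and the same transfer of the pointwise inequality. The only cosmetic differences are that you phrase the correspondence as a bijection between $\mathcal{D}_p(v)$ and $\mathcal{D}_p(u)$ (which handles the infima slightly more cleanly than the paper's choice of a near-optimal $g$) and that you explicitly note the degenerate $p=\infty$ case.
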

\begin{proof}
Without loss of generality, we may assume $0\in\boz$ is a star center. Fix $0<\lambda<\fz$. Let $u\in W^{1,p}(\lambda\boz)$ be arbitrary. We define a function $u_\lambda$ on $\boz$ by setting 
\[u_\lambda(z):=u(\lambda z)\]
for every $z\in\boz$. Then, by the change of variables formula, we have $u_\lambda\in W^{1, p}(\boz)$ with
\begin{equation}\label{Equa1}
 \|\nabla u_\lambda\|_{L^p(\boz)}=\lambda^{1-\frac{n}{p}}\|\nabla u\|_{L^p(\lambda\boz)}.
\end{equation}
Since $W^{1, p}(\boz)=_C M^{1, p}(\boz)$, there exists a function $g_{u_\lambda}\in\mathcal D_p(u_\lambda)$ with 
\begin{equation}\label{eq:compare}
\frac{1}{C}\|\nabla u_\lambda\|_{L^p(\boz)}\leq\|g_{u_\lambda}\|_{L^p(\boz)}\leq C\|\nabla u_\lambda\|_{L^p(\boz)}.
\end{equation}
 Then we define a function $g_u$ on $\lambda\boz$ by setting 
\[g_u(z):=\frac{1}{\lambda}g_{u_\lambda}\lf(\frac{z}{\lambda}\r)\]
for every $z\in\lambda\boz$. Then for almost every $z_1, z_2\in\lambda\boz$, we have 
\begin{eqnarray}
|u(z_1)-u(z_2)|&=&\lf|u_\lambda\lf(\frac{z_1}{\lambda}\r)-u_{\lambda}\lf(\frac{z_2}{\lambda}\r)\r|\nonumber\\
                   &\leq&|z_1-z_2|\lf(\frac{1}{\lambda}g_{u_\lambda}\lf(\frac{z_1}{\lambda}\r)+\frac{1}{\lambda}g_{u_\lambda}\lf(\frac{z_2}{\lambda}\r)\r)\nonumber\\
                   &\leq&|z_1-z_2|(g_u(z_1)+g_u(z_2)).\nonumber
\end{eqnarray}
The change of variables formula implies 
\begin{equation}\label{Equa2}
\|g_u\|_{L^p(\lambda\boz)}=\lambda^{\frac{n}{p}-1}\|g_{u_\lambda}\|_{L^p(\boz)}.
\end{equation}
Hence $g_u\in\mathcal D_p(u)$. By combining inequalities (\ref{Equa1}), (\ref{eq:compare}) and (\ref{Equa2}), we obtain the desired inequality
\[\frac{1}{C}\|\nabla u\|_{L^p(\lambda\boz)}\leq\|g_u\|_{L^p(\lambda\boz)}\leq C\|\nabla u\|_{L^p(\lambda\boz)}.\]
\end{proof}

\section{Maximal functions}
We will define a maximal function $M^{\tau}[f]$. That will vary only the first component $t$. 
For every $x\in\psi(1)\boz\subset\rr^{n-1}$ set 
\begin{equation}
S_x:=\{t\in\rr\setcolon (t,x)\in\Omega_\psi\} .\nonumber
\end{equation}
Let
 $f\colon\Omega_{\psi}\to\rr$ be
measurable and let $(t, x)\in \Omega_\psi$. We define the one-dimensional maximal function in the direction of the first variable by setting
\begin{equation}\label{t-max fun}
M^{\tau}[f](t, x):=\sup_{[a,b]\ni t}\vint_{[a, b]\cap S_x}|f(s,x)|\, ds\, .
\end{equation}
The supremum is taken over all intervals $[a, b]$ containing $t$. 

The next lemmas tell us that $M^\tau$ enjoys the usual $L^p$-boundedness property. See \cite[Lemma 3.1]{IMRN} for a proof.
\begin{lemma}\label{lem:M}
Let $1<p<\fz$. Then for every $f\in L^p(\boz_\psi)$, $M^\tau[f]$ is measurable and we have 
\begin{equation}\label{equa1}
\int_{\Omega_\psi }\lf|M^\tau[f](z)\r|^p\, dz\leq C\int_{\Omega_\psi }\lf|f(z)\r|^p\, dz\, ,
\end{equation}
where the constant $C$ is independent of $f$. 
\end{lemma}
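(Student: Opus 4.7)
The plan is to reduce the $n$-dimensional bound to the classical one-dimensional Hardy--Littlewood strong $(p,p)$ estimate by slicing in the $x$-variable. The key structural fact is that for each $x\in\psi(1)\boz$ the slice $S_x\subset(0,2)$ is an interval: if $(t_1,x)\in\boz_\psi$ and $t_1\le t_2<2$, then $\psi(t_1)\le\psi(t_2)$ by monotonicity, so $\frac{x-x_o}{\psi(t_2)}+x_o$ lies on the segment joining $x_o$ to $\frac{x-x_o}{\psi(t_1)}+x_o\in\boz$, and the star-shapedness of $\boz$ keeps that point inside $\boz$. Hence $S_x$ has the form $(t_x,2)$ or $[t_x,2)$ for some $t_x\in[0,1)$, and in particular it is Lebesgue measurable.

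Next, for fixed $x\in\psi(1)\boz$ set $f_x(s):=|f(s,x)|\chi_{S_x}(s)$, viewed as a function on all of $\rr$. For any interval $[a,b]\ni t$ with $t\in S_x$, we have (up to a null set of endpoints) $[a,b]\cap S_x=[\max(a,t_x),b]$, so
\[
\frac{1}{|[a,b]\cap S_x|}\int_{[a,b]\cap S_x}|f(s,x)|\,ds=\frac{1}{b-\max(a,t_x)}\int_{\max(a,t_x)}^{b}f_x(s)\,ds,
\]
which is the 1D average of $f_x$ over an interval still containing $t$. Taking the supremum yields $M^\tau[f](t,x)\le M_\rr[f_x](t)$, where $M_\rr$ denotes the classical Hardy--Littlewood maximal operator on the line. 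Its strong $(p,p)$ inequality, with constant $C_p$ depending only on $p$, gives
\[
\int_{S_x}|M^\tau[f](t,x)|^p\,dt\le C_p\int_{\rr}|f_x(s)|^p\,ds=C_p\int_{S_x}|f(s,x)|^p\,ds,
\]
and integrating in $x\in\psi(1)\boz$ and applying Fubini produces \eqref{equa1}.

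For the joint measurability of $M^\tau[f]$ on $\boz_\psi$, I would reduce to a countable supremum. For each rational pair $(a,b)$, Fubini applied to $|f|\chi_{\boz_\psi}$ and $\chi_{\boz_\psi}$ on $\rr\times\rr^{n-1}$ shows that both $x\mapsto\int_{[a,b]}|f(s,x)|\chi_{\boz_\psi}(s,x)\,ds$ and $x\mapsto|[a,b]\cap S_x|$ are measurable, hence so is the product of $\chi_{[a,b]}(t)$ with their quotient on the set where the denominator is positive. A routine approximation of a general interval $[a,b]\ni t$ by rational intervals $[a_n,b_n]\ni t$ with $a_n\to a$ and $b_n\to b$ shows that the supremum over all intervals in \eqref{t-max fun} agrees with the countable sup over rational intervals, which is therefore measurable.

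The main obstacle I expect is the uniform-in-$x$ pointwise comparison $M^\tau[f](t,x)\le M_\rr[f_x](t)$: both the integrand and the normalising length in the definition of $M^\tau$ are clipped by $S_x$, so one has to observe that trading $[a,b]$ for $[a,b]\cap S_x$ leaves the quotient unchanged. This works precisely because $S_x$ is an interval, which is the content of the structural step; for a generic measurable slice the clipped quotient need not be bounded by any 1D average of $f_x$ and the uniform constant $C_p$ would be lost.
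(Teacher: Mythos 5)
Your proof is correct and is essentially the argument the paper delegates to \cite[Lemma 3.1]{IMRN}: the monotonicity of $\psi$ together with the star-shapedness of $\boz$ makes each slice $S_x$ an interval reaching up to $2$, so $[a,b]\cap S_x$ is itself an interval containing $t$ and $M^\tau[f](t,x)\le M_{\rr}[f_x](t)$ for the one-dimensional uncentered Hardy--Littlewood maximal operator, after which the strong $(p,p)$ bound on each line and Fubini give \eqref{equa1}. The only nit is your displayed identity for $[a,b]\cap S_x$, which should read $[\max(a,t_x),\min(b,2))$ when $b\ge 2$; since this is still an interval containing $t$ on which $f_x=|f(\cdot,x)|$, the pointwise comparison and the rest of the argument are unaffected.
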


\section{Proof of the Main theorem}
Let $\boz\subset\rr^{n-1}$ be a star-shaped bounded domain with $W^{1, p}(\boz)=M^{1, p}(\boz)$ for some $1<p<\fz$. {Then $\boz_\psi\subset\rn$ is also a star-shaped domain. By Lemma \ref{le:approx}, $C^\fz(\overline{\boz_\psi})\cap W^{1, p}(\boz_\psi)$ is dense in $W^{1, p}(\boz)$.} Let $u\in C^\fz(\overline{\boz_\psi})\cap W^{1,p}(\boz_\psi)$ be arbitrary. Fix $0<t<2$, define the restriction of $u$ to $\{t\}\times\psi(t)\boz$ by setting 
\begin{equation}\label{howtildeu}
u_t(x)=u(t, x)\ {\rm on\ every}\ x\in\psi(1)\boz.
\end{equation}
Then $u_t\in W^{1, p}(\psi(t)\boz)$ for every $0<t<2$. We will construct a nonnegative function $g_u\in L^p(\boz_\psi)$ such that for every $t\in(0, 2)$ and every $x, y\in\psi(t)\boz$ we have 
\[|u_t(x)-u_t(y)|\leq |x-y|(g_u(t, x)+g_u(t, y))\]
and 
\[\int_{\boz_\psi}g_u^p(z)dz\leq C\int_{\boz_\psi}|\nabla u(z)|^pdz\]
with a constant $C$ independent of $u$. {If 
\[\int_{\boz_\psi}|\nabla u(z)|^pdz=0,\]
then $u\equiv c$ on $\boz_\psi$ for some constant $c\in\rr$. In this case, we simply define $g_u\equiv 0$ on $\boz_\psi$. Then we have 
\[|u(z_1)-u(z_2)|\leq |z_1-z_2|(g_u(z_1)+g_u(z_2))\]
for every $z_1, z_2\in\boz_\psi$ and $$\|g_u\|_{L^p(\boz_\psi)}=\|\nabla u\|_{L^p(\boz_\psi)}.$$ Let us consider the case that
\[T_u:=\int_{\boz_\psi}|\nabla u(z)|^pdz>0.\]
Denote the gradient with respect to the $x$-variable by $\nabla^{\chi}$. By Lemma \ref{le:uniform}, for every $t\in (0, 2)$, there exists a nonnegative function $g_t\in L^p(\psi(t)\boz)$ with
\begin{equation}\label{eq:est0}
|u(x_1)-u(x_2)|\leq|x_1-x_2|(g_{t}(x_1)+g_{t}(x_2))\, 
\end{equation}
for almost every\ $x_1$, $x_2\in\psi(t)\boz$ and
\begin{equation}\label{eq:control1}
\|g_{t}\|_{L^p(\psi(t)\boz)}\leq C\|\nabla^{\chi}u_t\|_{L^p(\psi(t)\boz)}
\end{equation}
for a constant $C$ independent of $t$. {Simply reset $g_t$ to be $\fz$ on a measure zero set, we can assume inequality (\ref{eq:est0}) hold for every $x_1, x_2\in\psi(t)\boz$. We define 
\[\hat g_t(x):=2g_t(x)+(T_u)^{\frac{1}{p}}\]
for every $x\in\psi(t)\boz$.  Since $u\in C^\fz(\overline{\boz_\psi})\cap W^{1, p}(\boz_\psi)$, there exists a small enough $0<\delta<1$ such that for every $x, y\in\psi(t)\boz$ with $0<|x-y|<\delta$, we have 
\begin{equation}\label{EQ:control1}
|u_t(x)-u_t(y)|< |x-y|(g_t(x)+(T_u)^{\frac{1}{p}})\leq|x- y|(\hat g_t(x)+\hat g_t(y)).
\end{equation}
Since $\psi(s)\boz\subset\psi(t)\boz$ for every $0<s<t$, there exists a small enough $0<\epsilon_t^1<t$ such that for every $s\in(t-\epsilon_t^1, t]$ and every $x, y\in\psi(s)\boz$ with $|x-y|<\delta$, we have 
\begin{equation}\label{EQ:control2}
|u_s(x)-u_s(y)|<|x-y|(\hat g_t(x)+\hat g_t(y))
\end{equation} 
Due to $u\in C^\fz(\overline{\boz_\psi})\cap W^{1, p}(\boz_\psi)$ again,  there exists a small enough $0<\epsilon^2_t<t$ such that for every $s\in(t-\epsilon_t^2, t]$ and every $x, y\in\psi(s)\boz$ with $|x-y|\geq\delta$, we have 
\begin{equation}\label{EQ:control3}
|u_s(x)-u_s(y)|\leq|x-y|(\hat g_t(x)+\hat g_t(y)).
\end{equation}
Hence, we can find a sufficiently small $0<\epsilon^t\leq\min\{\epsilon_t^1, \epsilon_t^2\}$ such that for every $s\in(t-\epsilon_t, t]$ and every $x, y\in \psi(s)\boz$, we have
\begin{equation}\label{EQ:control4}
|u_s(x)-u_s(y)|\leq|x-y|(\hat g_t(x)+\hat g_t(y)),
\end{equation}
and for every $s\in(t-\epsilon_t, t]$, we have 
\begin{equation}\label{EQ:control5}
\|g_t\|_{L^p(\psi(s)\boz)}\leq C\|\nabla^\chi u_s\|_{L^p(\psi(s)\boz)}
\end{equation}
with a constant $C$ independent of $s$ and $t$. By the covering theorem on the line segment $(0, 2]$, there exists an at most countable class $\lf\{(t_i-\epsilon_{t_i}, t_i]\r\}_{i\in I\subset\mathbb N}$ such that 
\[(0, 2]\subset\bigcup_{i\in I}(t_i-\epsilon_{t_i}, t_i]\]
and 
\[\sum_{i\in I}\chi_{(t_i-\epsilon_{t_i}, t_i]}(t)\leq 2\]
for every $t\in(0, 2]$. Simply extend $\hat g_{t}$ to $\rr^{n-1}$ by setting it to be $0$ outside $\psi(t)\boz$ and define a function $g_u$ on $\boz_\psi$ by setting 
\begin{equation}\label{eq:Upper}
g_u(t, x):=\sum_{i\in I} \hat g_{t_i}(x)\chi_{(t_i-\epsilon_{t_i}, t_i]}(t)
\end{equation}
for every $z=(t, x)\in\boz_\psi$. By (\ref{EQ:control4}) and (\ref{eq:Upper}), for every $t\in(0, 2]$ and every $x, y\in\psi(t)\boz$, we have 
\begin{equation}\label{EQ:control5}
|u_t(x)-u_t(y)|\leq|x-y|(g_u(t, x)-g_u(t, y)).
\end{equation}
By the argument above, we obtain $g_u\in L^p(\boz_\psi)$ with 
\begin{multline}
\int_{\boz_\psi}g_u^p(z)dz\leq C\sum_{i\in I}\int_{t_i-\epsilon_{t_i}}^{t_i}\int_{\psi(t)\boz}\hat g_{t_i}^p(x)dxdt\nonumber\\
\leq C\sum_{i\in I}\int_{t_i-\epsilon_{t_i}}^{t_i}\int_{\psi(t)\boz}(g^p_{t_{i}}(x)+T_u)dxdt\nonumber\\
\leq C\int_0^2\int_{\psi(t)\boz}|\nabla^\chi u_t(x)|^pdxdt+T_u
\leq C\int_{\boz_\psi}|\nabla u(z)|^pdz.\nonumber
\end{multline}
 }

First, we introduce some results which will be used in the proof of that $W^{1, p}(\boz)=M^{1, p}(\boz)$ implies $W^{1, p}(\boz_\psi)=M^{1, p}(\boz_\psi)$. By \cite{Pitor}, there is a bounded inclusion $\iota \co M^{1,p}(\boz_\psi) \hookrightarrow W^{1,p}(\boz_\psi)$.  To show that $\iota$ is an isomorphism, it suffices to show that its inverse $\iota^{-1}$ is both densely defined and bounded on $W^{1,p}(\boz_\psi)$. Since $C^\fz(\overline{\boz_\psi})\cap W^{1,p}(\boz_\psi)$ is dense in $W^{1,p}(\boz_\psi)$, it suffices to show that $C^\fz(\overline{\boz_\psi})\cap W^{1,p}(\boz_\psi) \subset M^{1, p}(\boz_\psi)$ and that for each $u \in C^\fz(\overline{\boz_\psi})\cap W^{1,p}(\boz_\psi)$ we have $$||u||_{M^{1,p}(\Omega_\psi)} \leq C||u||_{W^{1,p}(\boz_\psi)},$$ for a positive constant independent of $u$. Next, we prove the main estimate, which is similar to Lemma $4.1$ in \cite{IMRN}. To gain a proof for the following lemma, we simply replace $M^\chi[|\nabla u|]$ there by $g_u$ and repeat the argument.
\begin{lemma}\label{l:est1}
Let $\boz\subset\rr^{n-1}$ be a bounded star-shaped domain with $W^{1, p}(\boz)=M^{1, p}(\boz)$ for $1<p<\fz$ and $\psi:(0, 1]\to(0, \fz)$ be a left continuous and increasing function. Define an outward cuspidal domain $\boz_\psi$ as in (\ref{cusp}). Let $z_1=(t_1, x_1), z_2:=(t_2, x_2)\in\boz_\psi$ be two points with $t_1<t_2$.  Suppose that $u \in W^{1,p}(\Omega_{\psi})\cap C^1(\boz_\psi)$.
Then we have
\begin{eqnarray}\label{eq:est1}
|u(z_1)- u(z_2)| &\leq& C|z_1-z_2|\big(M^{\tau}[|
\nabla u
|](z_1) \ +\  M^{\tau}[g_{u}](z_1) \ +\ \nonumber \\
&&\ \ \ \ \ \ \ \ \  \ \ \ \ \,  M^{\tau}[|
\nabla u
|](z_2) \ +\  M^{\tau}[g_{u}](z_2)\big)\,.
\end{eqnarray}
\end{lemma}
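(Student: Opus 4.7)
The plan is to mimic the proof of \cite[Lemma~4.1]{IMRN}, replacing the cross-sectional maximal function $M^{\chi}[|\nabla u|]$ used there by the Haj\l asz gradient $g_u$ constructed in the paragraphs above. The guiding geometric observation is that $\psi$ is non-decreasing, so $\psi(s)\boz$ grows with $s$; on any $t$-slab lying above $t_2$ both $x_1$ and $x_2$ belong to every section $\psi(s)\boz$, and the Haj\l asz inequality (\ref{eq:est0}) is available there through $g_u(s,\cdot)$.

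Concretely, with $r := t_2 - t_1$ and $I := [t_2,\,t_2+r]$ (truncated to stay inside $(0,2)$ if necessary) I would introduce the averaged slice
\[
u_I(x) := \frac{1}{|I|}\int_I u(s, x)\,ds,\qquad x\in\psi(t_2)\boz,
\]
and split by the triangle inequality
\[
|u(z_1)-u(z_2)| \le |u(z_1)-u_I(x_1)|+|u_I(x_1)-u_I(x_2)|+|u_I(x_2)-u(z_2)|.
\]
For the first and third terms, the $C^1$ hypothesis lets me write the vertical differences $u(t_i, x_i)-u(s, x_i)$ as integrals of $\partial_\tau u(\cdot, x_i)$ along $[t_i, s]$; after averaging $s$ over $I$ and enlarging to an interval of length comparable to $r$ that contains $t_i$, Fubini produces a bound of the form $C r \, M^{\tau}[|\nabla u|](z_i) \le C|z_1-z_2|\,M^{\tau}[|\nabla u|](z_i)$. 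For the middle term, applying (\ref{eq:est0}) pointwise in $s\in I$ and factoring out $|x_1-x_2|\le|z_1-z_2|$ gives
\[
|u_I(x_1)-u_I(x_2)| \le \frac{|x_1-x_2|}{|I|}\int_I\bigl(g_u(s,x_1)+g_u(s,x_2)\bigr)\,ds;
\]
the $x_2$-average is already bounded by $M^{\tau}[g_u](z_2)$ since $I$ contains $t_2$, and the $x_1$-average is bounded by $2M^{\tau}[g_u](z_1)$ after enlarging the domain of integration to the length-$2r$ interval $[t_1,\,t_2+r]$, which contains $t_1$.

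The main obstacle I anticipate is the book-keeping on the averaging interval: one must confirm that $I$ and its enlargement $[t_1,\,t_2+r]$ lie inside the $t$-range on which the sections are defined, and that each enlargement of an integration domain introduces only a universal multiplicative constant. When $t_2+r\ge 2$ I would anchor on the other side, taking $I := [t_1-r,\,t_1]$ (with an analogous truncation), using the cylindrical annex $[1,2)\times\psi(1)\boz$ built into the definition of $\boz_\psi$ to ensure the alternative interval is non-trivial and that both points still lie in every cross-section along it. Beyond this book-keeping, the argument only invokes (\ref{eq:est0}) for $g_u$ together with the $L^p$-estimate for $M^{\tau}$ from Lemma~\ref{lem:M}, and is otherwise formally identical to the argument in \cite{IMRN}.
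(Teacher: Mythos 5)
Your main-case argument is essentially the paper's intended proof: the paper obtains Lemma \ref{l:est1} by repeating the proof of \cite[Lemma 4.1]{IMRN} with the cross-sectional maximal function there replaced by $g_u$, and your decomposition through the averaged slice $u_I$ with $I=[t_2,t_2+r]$ — resting on the fact that $\psi(s)\boz$ only grows in $s$, so both $x_1$ and $x_2$ lie in every section above $t_2$ and the slice inequality for $g_u(s,\cdot)$ applies there — is exactly that argument, and it is correct whenever $t_2+r<2$.

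The gap is in the boundary case $t_2+r\ge 2$, which you flag but then resolve incorrectly. Your fallback interval $I=[t_1-r,t_1]$ goes \emph{downward} from $t_1$, which is the direction in which the cusp narrows: for $s<t_1$ you only know $x_1\in\psi(t_1)\boz$, not $x_1\in\psi(s)\boz$, so $(s,x_1)$ (and a fortiori $(s,x_2)$, since $s<t_2$) need not lie in $\boz_\psi$, and neither the vertical integration of $\partial u/\partial t$ nor the slice inequality is available there. Moreover $[t_1-r,t_1]$ can leave $(0,2)$ entirely (take $t_1=1/2$, $t_2=19/10$), and the cylindrical annex cannot rescue it, since the annex sits above $t=1$ while your interval sits below $t_1$. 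The case is genuinely needed: if you instead merely truncate the primary interval to $[t_2,2)$, then comparing $\frac{1}{2-t_2}\int_{t_2}^{2}g_u(s,x_1)\,ds$ with $M^\tau[g_u](z_1)$ costs a factor $(2-t_1)/(2-t_2)$, which is unbounded as $t_2\to 2$. The repair uses the annex on the $z_2$ side rather than the $z_1$ side: $t_2+r\ge 2$ forces $t_2\ge 1$, so $z_2$ lies in the cylinder where the sections are constant. If also $t_1\ge 1$, average over $[t_1,t_2]\subset S_{x_1}\cap S_{x_2}$, which contains both $t_1$ and $t_2$, so no enlargement is needed at all. If $t_1<1\le t_2$, then $r>t_2-1$ together with $r\ge 2-t_2$ gives $r>1/2$, so you may average over $J=[1,2)$: one has $J\subset S_{x_1}\cap S_{x_2}$, $|J|=1\le 2r$, and the enlarged intervals $[t_1,2)\supset J\cup\{t_1\}$ and $J\ni t_2$ have length at most $2\le 4r$, so every comparison costs only a universal constant. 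With this case analysis substituted for your fallback, the rest of your argument goes through.
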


Let us prove the main result in this note.
\begin{proof}[Proof of Theorem \ref{thm:main}]  By \cite[Theorem 7]{hkt}, if $U$ is a bounded domain, $W^{1, \fz}(U)=M^{1, \fz}(U)$ if and only if $U$ is quasiconvex. Recall that a domain $U$ is quasiconvex if there exists a constant $C \geq 1$ such that, for every pair of points $x,y \in U$, there is a rectifiable curve $\gamma \subset U$ joining $x$ to $y$ so that ${\rm len}(\gamma) \leq C|x-y|$ for a constant $C$ independent of $x, y$. For every left continuous and increasing function $\psi:(0, 1]\to(0, \fz)$, $\boz_\psi$ is quasiconvex if and only if $\boz$ is quasiconvex. Hence, we have $W^{1, \fz}(\boz_\psi)=M^{1, \fz}(\boz_\psi)$ if and only if $W^{1, \fz}(\boz)=M^{1, \fz}(\boz)$. 

Fix $1<p<\fz$. First, we show $M^{1, p}(\boz)=W^{1, p}(\boz)$ implies $M^{1, p}(\boz_\psi)=W^{1, p}(\boz_\psi)$ By \cite{Pitor}, we know that $M^{1,p}(\boz_\psi)$ can be boundedly embedded into $W^{1,p}(\boz_\psi)$. To show $W^{1, p}(\boz_\psi)=M^{1, p}(\boz_\psi)$ it suffices to show that the dense subspace $C^\fz(\overline{\boz_\psi})\cap W^{1,p}(\boz_\psi)$ of $W^{1,p}(\boz_\psi)$ is contained in $M^{1,p}(\boz_\psi)$ with $M^{1, p}$-norm is controlled by $W^{1, p}$-norm form above uniformly. Let $u\in C^{\fz}(\overline{\boz_\psi})\cap W^{1,p}(\boz_\psi)$ be arbitrary.
Set
\begin{equation}\label{equa:ud}
\hat g(t, x)=M^\tau[|\nabla  u|](t, x)+g_{u}(t, x)
+M^\tau[g_{u}](t, x)\, .
\end{equation}
Here $g_u$ is defined as in (\ref{eq:Upper}).

By \eqref{eq:est0} and Lemma \ref{l:est1},  for every $z_1, z_2\in\boz_\psi$, we get the estimate
\begin{equation}
|u(z_1)-u(z_2)|\leq C|z_1-z_2|(\hat g(z_1)+\hat g(z_2))\, .\nonumber
\end{equation}
Define $g:=C\hat g\in\mathcal D_p(u)$ for a suitable constant $C>1$. The triangle inequality gives
\begin{equation}
\int_{\boz_\psi}|g(z)|^p\, dz\leq C\lf(\int_{\boz_\psi}M^\tau[|\nabla  u|](z)^p\, dz+\int_{\boz_\psi}g_{u}(z)^p\,dz+\int_{\boz_\psi}M^\tau[g_{u}](z)^p\, dz\r)\, .\nonumber
\end{equation}
The inequality (\ref{eq:control1}) leads to the estimate
\begin{equation}
\int_{\boz_\psi}g_{u}(z)^p\, dz \leq C\int_{\boz_\psi}|\nabla^{\chi}u(z)|^pdz.
\end{equation}
Lemma \ref{lem:M} leads to the estimates
 \begin{equation}
 \int_{\boz_\psi}|M^\tau[|\nabla  u|](z)|^p\, dz\leq C \int_{\boz_\psi}|\nabla  u(z)|^p\, dz\nonumber
 \end{equation}
 and 
 \begin{eqnarray}
 \int_{\boz_\psi}|M^\tau[g_{u}](z)|^p\, dz&\leq& C\int_{\boz_\psi}g_{u}(z)^p\, dz\nonumber\\
&\leq& C\int_{\boz_\psi}|\nabla u(z)|^p\, dz\, ,\nonumber
 \end{eqnarray}
 which imply that $g\in\mathcal{D}_p(u)$ and that $$\|u\|_{M^{1,p}(\boz_\psi)} \leq C \|u\|_{W^{1,p}(\boz_\psi)}.$$ That is, $C^\fz(\overline{\boz_\psi})\cap W^{1,p}(\boz_\psi)$ can be boundedly embedded into $M^{1, p}(\boz_\psi)$. Hence, we proved that $W^{1, p}(\boz)=M^{1, p}(\boz)$ implies $W^{1, p}(\boz_\psi)=M^{1, p}(\boz_\psi)$.
 
Next, we prove $W^{1, p}(\boz_\psi)=M^{1, p}(\boz_\psi)$ implies $W^{1, p}(\boz)=M^{1, p}(\boz)$. {Since $\boz$ is star-shaped, by a similar argument as above}, it suffices to show the dense subspace $C^\fz(\overline{\boz})\cap W^{1,p}(\boz)$ can be boundedly embedded into $M^{1, p}(\boz)$. Let $u\in C^\fz(\overline\boz)\cap W^{1, p}(\boz)$ be arbitrary. If $u\equiv c$ for some constant $c\in\rr$, then  $u\in M^{1,p}(\boz)$ with $$\|u\|_{W^{1, p}(\boz)}=\|u\|_{M^{1, p}(\boz)}.$$ Hence, we assume $u$ is not a constant function. Hence, we have 
$$\|\nabla u\|_{L^p(\boz)}>0.$$ 
As before, we assume $0\in\boz$ is a star center. We define a function $\tilde u$ on $\psi(1)\boz$ by setting 
\[\tilde u(x)=u\lf(\frac{x}{\psi(1)}\r)\ {\rm for\ every}\ x\in\psi(1)\boz.\]
The change of variables formula implies
\begin{equation}\label{eq:inequa1}
\|\tilde u\|_{L^p(\psi(1)\boz)}=\psi(1)^{\frac{-n}{p}}\|u\|_{L^p(\boz)}\ {\rm and}\ \|\nabla^\chi\tilde u\|_{L^p(\psi(1)\boz)}=\psi(1)^{1-\frac{n}{p}}\|\nabla^\chi u\|_{L^p(\boz)}.
\end{equation}
Hence, $\tilde u\in C^1(\psi(1)\boz)\cap W^{1, p}(\psi(1)\boz)$. Simply by the geometry, we can write
\[\boz_\psi:=\bigcup_{x\in\psi(1)\boz}S_x.\]
We define a function $\hat u$ on $\boz_\psi$ by setting 
\[\hat u(t, x):=\tilde u(x)\ {\rm for\ every}\ (t, x)\in\boz_\psi.\]
Since $\psi(t)\boz\subset\psi(1)\boz$ for every $t\in(0, 2)$, we have $\hat u\in C^1(\boz_\psi)$ with
\begin{equation}
\|\hat u\|_{L^p(\boz_\psi)}\leq2\|\tilde u\|_{L^p(\psi(1)\boz)}\ {\rm and}\ \|\nabla\hat u\|_{L^p({\boz_\psi})}\leq 2\|\nabla^\chi\tilde u\|_{L^p(\psi(1)\boz)}.\nonumber
\end{equation}
Hence, $\hat u\in C^1(\boz_\psi)\cap W^{1, p}(\boz_\psi)$. Since $W^{1, p}(\boz_\psi)=M^{1, p}(\boz_\psi)$, there exists $g\in\mathcal D_p(\hat u)$ with 
\[\|g\|_{L^p(\boz_\psi)}\leq C\|\nabla\hat u\|_{L^p(\boz_\psi)}\]
and
\[|\hat u(z_1)-\hat u(z_2)|\leq|z_1-z_2|(g(z_1)+g(z_2))\]
for almost every $z_1, z_2\in\boz_\psi$. We assume last inequality holds at every $z_1, z_2\in\boz_\psi$ by simply setting $g=\fz$ on a measure-zero set. Set $g_t$ to be the restriction of $g$ to $\{t\}\times\psi(t)\boz$ and define
\[A:=\inf_{t\in(1, 2)}\|g_t\|_{L^p(\psi(t)\boz)}.\]
Then, we have 
\[0<A\leq C\|\nabla\hat u\|_{L^p(\boz_\psi)}\leq C\|\nabla^\chi\tilde u\|_{L^p(\psi(1)\boz)}.\]
There exists $\hat t\in(1, 2)$ with $$A\leq\|g_{\hat t}\|_{L^p(\psi(\hat t)\boz)}\leq2A.$$ 
Then for every $x_1,x_2\in\psi(1)\boz$, we have 
\[|\tilde u(x_1)-\tilde u(x_2)|=|\hat u(\hat t, x_1)-\hat u(\hat t, x_2)|\leq |x_1-x_2|(g_{\hat t}(x_1)+g_{\hat t}(x_2)).\]
Hence, we have $g_{\hat t}\in\mathcal D_p(\tilde u)$ with 
\[\|g_{\hat t}\|_{L^p(\psi(1)\boz)}\leq C\|\nabla^\chi\tilde u\|_{L^p(\psi(1)\boz)}.\]
Define a function $g$ on $\boz$ by setting 
\[g(x):=\frac{1}{\psi(1)}g_{\hat t}(\psi(1)x)\ {\rm for\ every}\ x\in\boz.\]
Then, we have 
\[|u(x_1)-u(x_2)|\leq|x_1-x_2|(g(x_1+g(x_2)))\]
for every $x_1,x_2\in\boz$, and
\begin{equation}\label{eq:inequa2}
\|g\|_{L^p(\boz)}=\psi(1)^{\frac{n}{p}-1}\|g_{\hat t}\|_{L^p(\psi(1)\boz)}.
\end{equation}
Hence, we obtain $g\in\mathcal D_p(u)$ with 
\[\|g\|_{L^p(\boz)}\leq C\|\nabla^\chi u\|_{L^p(\boz)}\]
for a constant $C$ independent of $u$. Hence, we have $C^\fz(\overline{\boz})\cap W^{1,p}(\boz)\subset M^{1, p}(\boz)$ with
\[\|u\|_{M^{1, p}(\boz)}\leq C\|u\|_{W^{1, p}(\boz)}\]
as desired. 
\end{proof}


\begin{thebibliography}{99}
\bibitem{AandF}
E.~Acerbi and N.~Fusco, \textit{An approximation lemma for $W^{1,p}$-functions}. Material instabilities in continuum mechanics (Edinburgh, 1985–1986), 1–5, Oxford Sci. Publ., Oxford Univ. Press, New York, 1988.


\bibitem{BandH}
B.~Bojarski and P.~Haj\l asz, 
\textit{Pointwise inequalities for Sobolev functions and some applications}, 
Studia Math. 106 (1993), no. 1, 77--92.

\bibitem{IMRN}
S. Eriksson-Bique, P. Koskela, J. Mal\'y and Z. Zhu, \textit{Pointwise Inequalities for Sobolev Functions on Outward Cuspidal Domains,} International Mathematics Research Notices, 2020;, rnaa279.

\bibitem{Evens}
L.~C.~Evans and R.~F.~Gariepy,
\textit{Measure Theory and Fine Properties of Functions},
Revised ed., in: Textbooks in Mathematics, CRC Press, Boca Raton, FL, 2015.


\bibitem{Pitor}
P.~Haj\l asz,
\textit{Sobolev spaces on an arbitrary metric space},
Potential Anal.~5(1996), no. 4, 403--415.


\bibitem{hkt}
P.~Haj\l asz, P.~Koskela and H.~Tuominen,  
\textit{Sobolev embeddings, extensions and measure density condition},
J.~Funct.~Anal.~254 (2008), no.~5, 1217--1234. 

\bibitem{hkt2}
P. Haj\l{}asz, P. Koskela and H. Tuominen, 
\textit{Measure density and extendability of Sobolev functions.} Rev. Mat. Iberoam. 24 (2008), no. 2, 645--669. 



\bibitem{HKST}
J.~Heinonen, P.~Koskela, N.~Shanmugalingam and J.~Tyson, \textit{Sobolev spaces on metric measure spaces. An approach based on upper gradients}, New Mathematical Monographs, 27. Cambridge University Press, Cambridge, 2015.



\bibitem{ks}
P.~Koskela and E.~Saksman
\textit{Pointwise characterizations of Hardy-Sobolev functions}
Math.~Res.~Lett.~15 (2008), 727--744.




\bibitem{Lewis}
J.~Lewis, \textit{On very weak solutions of certain elliptic systems}, Comm.~Partial Differential Equations 18 (1993), no. 9-10, 1515--1537.

\bibitem{mazya}
V. G. Maz'ya, \textit{Sobolev spaces}
 Translated from the Russian by T. O. Shaposhnikova. Springer Series in Soviet Mathematics. Springer-Verlag, Berlin, 1985.

\bibitem{Romanov}
A.~S.~Romanov,
\textit{On a generalization of Sobolev spaces},
Sibirsk.~Mat.~Zh.~39 (1998), no.~4, 949-953;
traslation in Siberian Math.~J.~39 (1998), no.~4, 821--824.


\bibitem{Stein}
E.~M.~Stein, \textit{Harmonic analysis: real-variable methods, orthogonality, and oscillatory integrals}, With the assistance of Timothy S. Murphy, Princeton Mathematical Series, 43. Monographs in Harmonic Analysis, III. Princeton University Press, Princeton, NJ, 1993.
\end{thebibliography}
\end{document}